\theoremstyle{plain}
\newtheorem{theorem}[subsection]{Theorem}
\newtheorem{lemma}[subsection]{Lemma}
\newtheorem{corollary}[subsection]{Corollary}
\theoremstyle{definition}
\newtheorem{definition}[subsection]{Definition}
\newtheorem{remark}[subsection]{Remark}
\newcommand{\CC}{\ensuremath{\mathbb{C}}}
\newcommand{\XX}{\ensuremath{\mathbb{X}}}
\newcommand{\VV}{\ensuremath{\mathbb{V}}}
\newcommand{\Set}{\ensuremath{\mathsf{Set}}}
\newcommand{\T}{\ensuremath{\mathrm{T}}}
\newcommand{\defn}{\textbf}
\newcommand{\op}{\mathrm{op}}
\newcommand{\noproof}{\hfill \qed}
\renewcommand{\to}{\rightarrow}
\newcommand{\map}[2]{\protect{#1\to #2}}
\newcommand{\mono}[2]{\protect{#1\rightarrowtail #2}}
\newcommand{\regepi}[2]{\protect{#1\twoheadrightarrow #2}}
\newcommand{\threesom}[3]{\protect{\left[\begin{smallmatrix} {#1}\\ {#2}\\ {#3}\end{smallmatrix}\right]}}
\def\pullback{
 \ar@{-}[]+R+<6pt,-1pt>;[]+RD+<6pt,-6pt>%
 \ar@{-}[]+D+<1pt,-6pt>;[]+RD+<6pt,-6pt>}
\begin{document}

\newdir{>>}{{}*!/3.5pt/:(1,-.2)@^{>}*!/3.5pt/:(1,+.2)@_{>}*!/7pt/:(1,-.2)@^{>}*!/7pt/:(1,+.2)@_{>}}
\newdir{ >>}{{}*!/8pt/@{|}*!/3.5pt/:(1,-.2)@^{>}*!/3.5pt/:(1,+.2)@_{>}}
\newdir{ |>}{{}*!/-3.5pt/@{|}*!/-8pt/:(1,-.2)@^{>}*!/-8pt/:(1,+.2)@_{>}}
\newdir{ >}{{}*!/-8pt/@{>}}
\newdir{>}{{}*:(1,-.2)@^{>}*:(1,+.2)@_{>}}
\newdir{<}{{}*:(1,+.2)@^{<}*:(1,-.2)@_{<}}

\title[Approximate Hagemann--Mitschke co-operations]%
{Approximate Hagemann--Mitschke co-operations}

\dedicatory{Dedicated to George Janelidze on the occasion of his sixtieth birthday}

\author{Diana Rodelo}
\author{Tim Van~der Linden}

\thanks{The first author was supported by CMUC/FCT (Portugal) and the FCT Grant PTDC/MAT/120222/2010 through the European program COMPETE/FEDER}
\thanks{The second author works as \emph{charg\'e de recherches} for Fonds de la Recherche Scientifique--FNRS and would like to thank CMUC for its kind hospitality during his stays in Coimbra.}

\email{drodelo@ualg.pt}
\email{tim.vanderlinden@uclouvain.be}

\address{Departamento de Matem\'atica, Faculdade de Ci\^{e}ncias e Tecnologia, Universidade do Algarve, Campus de
Gambelas, 8005--139 Faro, Portugal}
\address{CMUC, Universidade de Coimbra, 3001--454 Coimbra, Portugal}
\address{Institut de recherche en math\'ematique et physique, Universit\'e catholique de Louvain, chemin du cyclotron~2 bte~L7.01.02, B--1348 Louvain-la-Neuve, Belgium}

\keywords{Mal'tsev, $n$-permutable category; binary coproduct; approximate co-operation}

\subjclass[2010]{
08C05, 
18C10, 
18B99, 
18E10} 

\begin{abstract}
We show that varietal techniques based on the existence of operations of a certain arity can be extended to $n$-permutable categories with binary coproducts. This is achieved via what we call \emph{approximate Hagemann--Mitschke co-operations}, a generalisation of the notion of approximate Mal'tsev co-opera\-tion~\cite{AMO}. In particular, we extend characterisation theorems for $n$-permutable varieties due to J.~Hagemann and A.~Mitschke~\cite{Hagemann, Hagemann-Mitschke} to regular categories with binary coproducts.
\end{abstract}

\date{\today}

\maketitle

\section{Introduction}

A variety of universal algebras is called \emph{$n$-permutable} when its congruence relations satisfy the \emph{$n$-permutability condition}: for congruences $R$ and $S$ on an algebra~$X$, the equality $(R,S)_n=(S,R)_n$ holds, where $(R,S)_n=RSRS\cdots$ denotes the composition of $n$ alternating factors $R$ and $S$. In a categorical context, this notion was first considered by A.~Carboni, G.~M.~Kelly and M.~C.~Pedicchio in the article~\cite{Carboni-Kelly-Pedicchio}. Here an \emph{$n$-permutable category} is defined as a regular category~\cite{Barr} in which the (effective) equivalence relations satisfy the $n$-permutability condition.

For a variety $\VV$ of universal algebras, it was shown by A.~I.~Mal'tsev in~\cite{Maltsev-Sbornik} that $2$-permutability of congruences is equivalent to the condition that the theory of $\VV$ admits a ternary operation $p$ such that $p(x,y,y)=x$ and $p(x,x,y)=y$. Then $\VV$ is called a \emph{Mal'tsev variety}~\cite{Smith} or a \emph{$2$-permutable variety} and $p$ a \emph{Mal'tsev operation}. Similarly, for the strictly weaker $3$-permutability condition~\cite{Mitschke}, the theory admits ternary operations $r$ and $s$ such that $r(x,y,y)=x$, $r(x,x,y)=s(x,y,y)$ and $s(x,x,y)=y$. More generally, the $n$-permutability of congruences can be characterised by the existence of ternary operations satisfying suitable equations (\cite{Hagemann-Mitschke}, see Theorem~\ref{2 of Hagemann-Mitschke} below) or, equivalently, by the existence of certain $(n+1)$-ary operations (\cite{Groetzer, Hagemann-Mitschke, Schmidt, Wille}, see Theorem~\ref{1 of Hagemann-Mitschke} below).

The first aim of this work is to give a categorical version of such ternary (and $(n+1)$-ary) operations for $n$-permutable categories. We do this in the context of regular categories with binary coproducts via \emph{approximate Hagemann--Mitschke co-operations} with a certain \emph{approximation} (see Definition~\ref{approximate ops} and Figure~\ref{ternary co-ops}). This method extends D.~Bourn and Z.~Janelidze's approach to Mal'tsev categories via \emph{approximate Mal'tsev (co-)operations}~\cite{AMO}, which makes it possible to lift varietal techniques to the categorical level and obtain general versions of the characterisation theorems for $n$-permutable varieties mentioned above (Theorems~\ref{imaginary n-permutable 2} and~\ref{imaginary n-permutable 1}). We believe this aspect of our work gives a good illustration of the strength and generality of D.~Bourn and Z.~Janelidze's technique. See also~\cite{DB-ZJ-2009, DB-ZJ-2009b, ZJanCPIRVI, DB-ZJ-2011} where the authors further develop their theory of approximate operations.

The second aim of our paper---in fact, the problem which we originally set out to solve---is answering the following question. J.~Hagemann discovered a purely categorical characterisation of $n$-permutable varieties~\cite{Hagemann-Mitschke}:

\begin{theorem}\label{3 of Hagemann-Mitschke}
For any variety $\VV$ of universal algebras, the following statements are equivalent:
\begin{enumerate}
\item the congruence relations of every algebra of $\VV$ are $n$-permutable;
\item for $A\in\VV$, every reflexive subalgebra $R$ of $A\times A$ satisfies $R^{\op} \leq R^{n-1}$;
\item for $A\in\VV$, every reflexive subalgebra $R$ of $A\times A$ satisfies $R^{n}\leq R^{n-1}$.\noproof
\end{enumerate}
\end{theorem}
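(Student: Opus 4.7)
The plan is to prove the cycle of implications $(a) \Rightarrow (b) \Rightarrow (c) \Rightarrow (a)$, using the characterisation of $n$-permutability via Hagemann--Mitschke ternary operations given by Theorem~\ref{2 of Hagemann-Mitschke}: condition $(a)$ is equivalent to the existence in the theory of $\VV$ of ternary terms $p_1, \dots, p_{n-1}$ satisfying $p_1(x,y,y) = x$, $p_i(x,x,y) = p_{i+1}(x,y,y)$ for $1 \leq i \leq n-2$, and $p_{n-1}(x,x,y) = y$.

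For $(a) \Rightarrow (b)$, take a reflexive subalgebra $R$ of $A \times A$ and a pair $(a,b) \in R^{\op}$, so that $(b,a) \in R$; by reflexivity, the three pairs $(a,a)$, $(b,a)$, $(b,b)$ all lie in $R$. Applying each $p_i$ coordinate-wise to this triple and invoking the subalgebra property yields $(p_i(a,b,b), p_i(a,a,b)) \in R$ for every $i$, and the Hagemann--Mitschke identities collapse these $n-1$ pairs into the single chain $a,\ p_2(a,b,b),\ p_3(a,b,b),\ \dots,\ p_{n-1}(a,b,b),\ b$ of $n-1$ consecutive $R$-steps, so that $(a,b) \in R^{n-1}$.

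The implication $(b) \Rightarrow (c)$ is routed through $(a)$. To establish $(b) \Rightarrow (a)$, I would work in the free algebra $F(x,y)$ on two generators and consider the reflexive subalgebra $R$ generated by the diagonal together with $(x,y)$; its elements are exactly the pairs $(h(x,x,y), h(y,x,y))$ for ternary terms $h$. The hypothesis gives $(y,x) \in R^{\op} \leq R^{n-1}$, yielding a chain $y = z_0, z_1, \dots, z_{n-1} = x$ in $R$; the ternary terms associated with the consecutive steps satisfy, via the universal property of $F(x,y)$ and after a suitable relabelling, exactly the Hagemann--Mitschke identities. For $(a) \Rightarrow (c)$, one reruns the shortening idea of $(a) \Rightarrow (b)$ at higher complexity: given a chain $c_0, \dots, c_n$ witnessing $(c_0, c_n) \in R^n$, the Hagemann--Mitschke terms are applied iteratively to carefully assembled $R$-pairs built from the chain in order to fold it down to one of length $n-1$.

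The final implication $(c) \Rightarrow (a)$ calls for an analogous free-algebra extraction: in a suitable free algebra one arranges a reflexive subalgebra $R$ so that a canonical length-$n$ chain lies in $R^n$, and the hypothesis then provides a chain of length $n-1$ from which the Hagemann--Mitschke terms can be read off. The main obstacle, I expect, is precisely this bookkeeping in the free-algebra extractions: both in $(b) \Rightarrow (a)$ and in $(c) \Rightarrow (a)$ one has to parametrise the reflexive subalgebra and relabel the derived ternary terms so that the universal identities obtained line up exactly with those of the Hagemann--Mitschke system, whereas the implication $(a) \Rightarrow (b)$ reduces to a direct coordinate-wise computation.
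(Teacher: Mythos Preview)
The paper does not prove Theorem~\ref{3 of Hagemann-Mitschke} as such: it is quoted from~\cite{Hagemann-Mitschke} (with the proof attributed to the unpublished~\cite{Hagemann}) and closed with a bare \textsc{qed}. What the paper \emph{does} prove is the categorical generalisation, Theorem~\ref{imaginary total thm}, whose restriction to varieties yields the statement in question; so the relevant comparison is with that proof.

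Your implications (a)~$\Rightarrow$~(b), (b)~$\Rightarrow$~(a) and (a)~$\Rightarrow$~(c) match the paper's arguments once one translates between ternary terms and approximate co-operations. The coordinate-wise computation with the $p_i$ in (a)~$\Rightarrow$~(b) and (a)~$\Rightarrow$~(c) is exactly the varietal reading of the paper's element-chasing with the $(w_i)_X$, and your free-algebra extraction for (b)~$\Rightarrow$~(a) is the varietal form of the paper's construction of $R$ as the image of $\langle \nabla_X+1_X,\,1_X+\nabla_X\rangle\colon 3X\to 2X\times 2X$ (in varietal terms, $R=\{(t(x,x,y),t(x,y,y)):t\text{ ternary}\}$ on the free algebra on two generators).

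The real divergence is at the step involving~(c). You propose a direct (c)~$\Rightarrow$~(a) via a free-algebra argument, but leave it as a sketch and correctly flag it as the main obstacle. It is: if one sets up a reflexive $R$ carrying a canonical $R^n$-chain and shortens it to an $R^{n-1}$-chain, the terms one reads off are naturally $(n{+}1)$-ary rather than ternary, and lining them up with the Hagemann--Mitschke identities requires additional work you have not supplied. The paper avoids this entirely by proving (c)~$\Rightarrow$~(b) instead, through a bootstrap: Lemma~\ref{(2n-2)-permutable} shows that the hypothesis $E^n\leq E^{n-1}$ for all reflexive $E$ already makes the category $(2n-2)$-permutable; the equivalence (a)~$\Leftrightarrow$~(b) at level $2n-2$ then yields $R^{\op}\leq R^{2n-3}$, and $n-2$ applications of $R^n\leq R^{n-1}$ collapse $R^{2n-3}$ down to $R^{n-1}$. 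This bootstrap is the non-obvious idea your outline is missing.
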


These three conditions make sense in arbitrary regular categories; nevertheless, they are not mentioned in the article~\cite{Carboni-Kelly-Pedicchio}. For the proof the authors of~\cite{Hagemann-Mitschke} refer to the unpublished work~\cite{Hagemann}. It is indeed not difficult to find a proof which is valid in varieties of algebras (see~\cite{MFVdL2} for part of it) but we failed to produce a categorical argument.

Assuming that binary coproducts exist---in fact, finite copowers suffice---we can mimic the varietal arguments in terms of ternary or $(n+1)$-arity operations, using approximate Hagemann--Mitschke co-operations instead. This is what we do in the present paper. Thus we obtain a version of the above characterisation theorem, valid in any regular category with binary coproducts (Theorem~\ref{imaginary total thm}). This, in turn, implies that in an $n$-permutable category with binary coproducts, any reflexive and transitive relation is symmetric (Corollary~\ref{Corollary RTS}).

On the other hand, in recent work with Z.~Janelidze~\cite{JRVdL1} we prove that Theorem~\ref{3 of Hagemann-Mitschke} is actually valid in regular categories, independently of the existence of binary coproducts---using a very different approach, since it does not (and can not) involve approximate (co-)operations.

\section{Preliminaries}\label{Preliminaries}

We recall the main definitions and properties known for $n$-permutable varieties from~\cite{Hagemann-Mitschke} and for $n$-permutable categories we follow~\cite{Carboni-Kelly-Pedicchio}.

\subsection{Relations}\label{Relations}
A category $\CC$ with finite limits is called a \defn{regular} category~\cite{Barr} when every kernel pair has a coequaliser and regular epimorphisms are stable under pulling back. In a regular category any morphism $f\colon A\rightarrow B$ can be decomposed into $f=m p$, where $p$ is a regular epimorphism and $m$ is a monomorphism. Regular categories give a suitable context for composing relations.

A \defn{relation} $R$ from $A$ to $B$ is a subobject $\langle r_1,r_2\rangle\colon\mono{R}{A\times B}$. The opposite relation, denoted $R^{\op}$, is the relation from $B$ to $A$ determined by the subobject $\langle r_2,r_1\rangle\colon R\rightarrowtail B\times A$. Given another relation $S$ from $B$ to $C$, the composite relation of $R$ and $S$ is a relation, denoted $SR$, from $A$ to $C$.

Given morphisms $a\colon\map{X}{A}$ and $b\colon\map{X}{B}$, we say that $\langle a,b\rangle$ \defn{belongs} to~$R$ when there exists a morphism $\chi\colon\map{X}{R}$ such that $r_1\chi=a$ and $r_2\chi=b$; we write $\langle a,b\rangle\in R$. For any morphism $c\colon\map{X}{C}$, we have $\langle a,c\rangle\in SR$ if and only if there exists a regular epimorphism $\zeta\colon\regepi{Z}{X}$ and a morphism $x\colon\map{Z}{B}$ such that $\langle a\zeta,x\rangle\in R$ and $\langle x, c\zeta\rangle\in S$ (see Proposition~2.1 in~\cite{Carboni-Kelly-Pedicchio}). This observation trivially extends to the composite of more than two relations. Moreover, when $R'$ is another relation from $A$ to $B$, then $R\leq R'$ if and only if any pair of morphisms $\langle a, b\rangle$ that belongs to $R$ also belongs to $R'$.

A relation $R$ from an object $A$ to $A$ is simply called a \defn{relation on $A$}. We say that $R$ is \defn{reflexive} when $1_A\leq R$, \defn{symmetric} when $R^{\op} = R$ and \defn{transitive} when $RR = R$. As usual, a relation $R$ on $A$ is an \defn{equivalence relation} when it is reflexive, symmetric and transitive. In particular, the kernel relation $\langle f_1,f_2\rangle\colon\mono{A\times_{B}A}{A\times A}$ of a morphism $f\colon\map{A}{B}$ is called an \defn{effective} equivalence relation or \defn{congruence}.

\subsection{$n$-Permutable varieties~\cite{Hagemann-Mitschke}}\label{$n$-permutable varieties}
A \defn{Mal'tsev} (or \defn{$2$-permutable}) variety of universal algebras is such that the composition of congruences is $2$-permutable, i.e., $RS=SR$, for any pair of congruences $R$ and $S$ on the same object. \defn{$3$-permutable} varieties satisfy the strictly
weaker $3$-permutability condition: $RSR= SRS$. More generally, for $n\geq 2$, \defn{$n$-permutable} varieties satisfy the $n$-permutability condition $(R,S)_n=(S,R)_n$, where $(R,S)_n=RSRS\cdots$ denotes the composite of~$n$ alternating factors $R$ and $S$. We write $R^{n}=(R,R)_{n}$ for the $n$-th power of $R$.

It is well known that an $n$-permutable variety of universal algebras is characterised by the condition that its theory contains $n-1$ ternary or, equivalently, $n+1$ operations of arity $n+1$ satisfying appropriate identities:

\begin{theorem}[Theorem~2 of~\cite{Hagemann-Mitschke}]\label{2 of Hagemann-Mitschke}
For any variety $\VV$ of universal algebras, the following statements are equivalent:
\begin{enumerate}
\item the congruence relations of every algebra of $\VV$ are $n$-permutable;
\item there exist ternary algebraic operations $w_1$, \dots, $w_{n-1}$ of $\VV$ for which the identities
\[
\begin{cases}
 w_1(x,y,y)=x,\\
 w_{i}(x,x,y) = w_{i+1}(x,y,y), & \text{for $i\in\{1, \dots, n-2\}$,}\\
 w_{n-1}(x,x,y)=y
\end{cases}
\]
hold.\noproof
\end{enumerate}
\end{theorem}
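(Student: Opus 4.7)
The plan is to prove (a) $\Leftrightarrow$ (b) by classical varietal arguments: in (a) $\Rightarrow$ (b) the operations are extracted from a chain of memberships in a free algebra, while in (b) $\Rightarrow$ (a) they are used to rewrite one alternating chain into the reversed alternating chain.

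For (a) $\Rightarrow$ (b) I would work in $A = F(x, y)$, the free algebra on two generators, and form the reflexive subalgebra $R \leq A \times A$ generated by $(x, y)$ together with the diagonal. Inspection of the generators shows that $(u, v) \in R$ if and only if there is a ternary term $t$ of $\VV$ with $u = t(x, x, y)$ and $v = t(x, y, y)$. Since $n$-permutability implies the condition $R^{\op} \leq R^{n-1}$ (direction (a) $\Rightarrow$ (c) of Theorem~\ref{3 of Hagemann-Mitschke}), and $(y, x) \in R^{\op}$, there is a chain $y = v_0, v_1, \ldots, v_{n-1} = x$ in $A$ with each $(v_{k-1}, v_k) \in R$, witnessed by a ternary term $u_k$ satisfying $v_{k-1} = u_k(x, x, y)$ and $v_k = u_k(x, y, y)$. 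The endpoint and telescoping conditions, after the reindexing $w_k := u_{n-k}$, yield exactly the identities in~(b).

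For (b) $\Rightarrow$ (a), given congruences $R$ and $S$ on some $A \in \VV$ and a chain $a = b_0, b_1, \ldots, b_n = c$ witnessing $(a, c) \in (R, S)_n$, I would set $c_0 := a$, $c_n := c$, and $c_i := w_i(b_{i-1}, b_i, b_{i+1})$ for $1 \leq i \leq n - 1$. The identities $w_1(x, y, y) = x$ and $w_{n-1}(x, x, y) = y$, combined with compatibility of the operations, place $(c_0, c_1)$ and $(c_{n-1}, c_n)$ in whichever of $R, S$ contains $(b_1, b_2)$ and $(b_{n-2}, b_{n-1})$ respectively---precisely what the reversed alternation demands. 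For intermediate $i$, the identity $w_{i-1}(x, x, y) = w_i(x, y, y)$ evaluated at $(x, y) = (b_{i-1}, b_i)$ supplies a common pivot $m$, so that both $(c_{i-1}, m)$ and $(m, c_i)$ lie in the relation containing $(b_{i-2}, b_{i-1})$ and $(b_i, b_{i+1})$---one common relation because the indices $i - 1$ and $i + 1$ have the same parity in the original chain---and transitivity of the congruence closes the gap. The resulting chain shows $(a, c) \in (S, R)_n$, and symmetry yields the reverse inclusion.

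The main obstacle is reconciling the asymmetric form of the middle identity $w_i(x, x, y) = w_{i+1}(x, y, y)$ with chain-theoretic data. In (a) $\Rightarrow$ (b) this asymmetry is handled by the ternary-term description of the reflexive subalgebra $R$, whose three generators supply exactly the three required argument patterns---an alternative approach via two congruences on $F(x, y, z)$ does not directly produce identities of this asymmetric form. In (b) $\Rightarrow$ (a), the formula $c_i = w_i(b_{i-1}, b_i, b_{i+1})$ is non-obvious and is best found by generalising the classical Mal'tsev construction $c_1 = p(a, b, c)$ from the case $n = 2$.
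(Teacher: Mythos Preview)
The paper does not give its own proof of Theorem~\ref{2 of Hagemann-Mitschke}; it is quoted from~\cite{Hagemann-Mitschke} with \verb|\noproof|. The natural point of comparison is the paper's categorical generalisation, Theorem~\ref{imaginary n-permutable 2}, whose argument specialises to varieties.

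Your direction (b) $\Rightarrow$ (a) is correct and is, in element form, exactly the proof of (b) $\Rightarrow$ (c) in Theorem~\ref{imaginary n-permutable 2}: your chain $c_i = w_i(b_{i-1}, b_i, b_{i+1})$ and pivot $m = w_{i-1}(b_{i-1},b_{i-1},b_i) = w_i(b_{i-1},b_i,b_i)$ are the varietal translation of the paper's computation with $\som{x_{i-1}}{x_i}{(w_i)}_Z$ and the identities $(\nabla_Z+1_Z)(w_j)_Z=(1_Z+\nabla_Z)(w_{j+1})_Z$.

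Your direction (a) $\Rightarrow$ (b), however, has a dependency problem. You invoke the implication (a) $\Rightarrow$ (b) of Theorem~\ref{3 of Hagemann-Mitschke} (you write ``(a) $\Rightarrow$ (c)'', but the condition $R^{\op}\leq R^{n-1}$ you use is item (b) there). As the introduction of the paper stresses, the proof of Theorem~\ref{3 of Hagemann-Mitschke} lives in Hagemann's unpublished manuscript~\cite{Hagemann}, and the arguments one can actually write down---including the paper's own Theorem~\ref{imaginary total thm}, (a) $\Rightarrow$ (b)---pass \emph{through} the Hagemann--Mitschke terms of Theorem~\ref{2 of Hagemann-Mitschke}. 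So while your proof is formally valid if Theorem~\ref{3 of Hagemann-Mitschke} is accepted as a black box, it does not stand on its own and risks circularity in the standard development.

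The paper's route, visible in (c) $\Rightarrow$ (a) of Theorem~\ref{imaginary n-permutable 2}, avoids this entirely: one works on the free object on $n+1$ generators (take $X$ free on one generator, so $(n+1)X$ is free on $n+1$) with the \emph{congruences} given as kernel pairs of the fold maps $k\nabla_X$ and $1_X+(k-1)\nabla_X+1_X$ (or their even-$n$ analogues). Since these are honest congruences, $n$-permutability applies directly to the pair $\langle\iota_1,\iota_{n+1}\rangle\in (R,S)_n$, and the resulting chain yields first the $(n+1)$-ary terms of Theorem~\ref{1 of Hagemann-Mitschke}, then the ternary ones via Remark~\ref{(n+1)-ary vs ternary}. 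Your closing remark that ``two congruences on $F(x,y,z)$ do not directly produce identities of this asymmetric form'' is correct, but the remedy is not to replace congruences by reflexive relations; it is to use $n+1$ generators instead of three.
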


\begin{theorem}[\cite{Wille, Schmidt, Hagemann-Mitschke}]\label{1 of Hagemann-Mitschke}
For any variety $\VV$ of universal algebras, the following statements are equivalent:
\begin{enumerate}
\item the congruence relations of every algebra of $\VV$ are $n$-permutable;
\item there exist $(n+1)$-ary algebraic operations $v_0$, \dots, $v_n$ of $\VV$ for which the identities
\[
\begin{cases}
 v_0(x_0, \dots, x_n)=x_0,\\
 v_{i-1}(x_0,x_0, x_2, x_2, \dots ) = v_i(x_0,x_0, x_2, x_2, \dots ), &\text{$i$ even,}\\
 v_{i-1}(x_0,x_1, x_1, x_3, x_3, \dots ) = v_i(x_0,x_1, x_1, x_3, x_3, \dots ), & \text{$i$ odd,}\\
 v_n(x_0, \dots, x_n)=x_n
\end{cases}
\]
hold.\noproof
\end{enumerate}
\end{theorem}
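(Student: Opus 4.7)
I would prove the two implications separately. The direction \textup{(b)}$\Rightarrow$\textup{(a)} uses the operations $v_i$ directly to construct a witnessing chain, while \textup{(a)}$\Rightarrow$\textup{(b)} extracts such operations from a free-algebra computation.

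For \textup{(b)}$\Rightarrow$\textup{(a)}, let $R$ and $S$ be congruences on $A\in\VV$, and suppose $(a,b)\in(R,S)_n$ via a chain $a = c_0 \mathrel{R} c_1 \mathrel{S} c_2 \mathrel{R} \cdots c_n = b$. I would set $d_i := v_i(c_0, \dots, c_n)$, so that $d_0 = a$ and $d_n = b$ by the boundary identities. The key claim is that $d_{i-1} \mathrel{S} d_i$ when $i$ is odd, and $d_{i-1} \mathrel{R} d_i$ when $i$ is even. For $i$ odd, the odd identity gives $v_{i-1}(c_0, c_1, c_1, c_3, c_3, \dots) = v_i(c_0, c_1, c_1, c_3, c_3, \dots)$; since $c_{2k-1}\mathrel{S} c_{2k}$ in the original chain and $S$ is a congruence, both $d_{i-1}$ and $d_i$ are $S$-related to this common value, and $d_{i-1} \mathrel{S} d_i$ follows by transitivity. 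The even case is dual, using the even identity and $c_{2k}\mathrel{R} c_{2k+1}$. This yields an $(S,R)_n$-chain from $a$ to $b$, and swapping the roles of $R$ and $S$ gives $(R,S)_n = (S,R)_n$.

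For \textup{(a)}$\Rightarrow$\textup{(b)}, I would work in the free $\VV$-algebra $F$ on generators $x_0, \dots, x_n$. Let $\Theta_R$ be the congruence on $F$ generated by the pairs $(x_{2k}, x_{2k+1})$, and let $\Theta_S$ be generated by the pairs $(x_{2k+1}, x_{2k+2})$. The generators themselves give an alternating chain $x_0 \mathrel{\Theta_R} x_1 \mathrel{\Theta_S} x_2 \mathrel{\Theta_R} \cdots x_n$, so $(x_0, x_n)\in(\Theta_R, \Theta_S)_n$. By $n$-permutability, $(x_0, x_n)\in(\Theta_S, \Theta_R)_n$, which produces elements $v_1, \dots, v_{n-1}\in F$ fitting into a chain $x_0 = v_0 \mathrel{\Theta_S} v_1 \mathrel{\Theta_R} v_2 \mathrel{\Theta_S} \cdots v_n = x_n$ (setting $v_0 := x_0$ and $v_n := x_n$). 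Each $v_i$ is an $(n+1)$-ary term and thus determines an operation of $\VV$. To verify the mixed identities, observe that the substitution sending $(x_0, x_1, x_2, x_3, \dots)$ to $(x_0, x_0, x_2, x_2, \dots)$ identifies every generating pair of $\Theta_R$ and hence collapses any $\Theta_R$-related pair; applied to the relations $v_{i-1}\mathrel{\Theta_R} v_i$ (which hold for $i$ even), this yields the even identities. Dually, the substitution $(x_0, x_1, x_1, x_3, x_3, \dots)$ collapses $\Theta_S$ and produces the odd identities.

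The main technical point is the construction in \textup{(a)}$\Rightarrow$\textup{(b)}: one must align the parity conventions of the chain extracted from $n$-permutability with the parity labelling of the required identities, and verify that the two chosen substitutions collapse exactly $\Theta_R$ and $\Theta_S$ respectively. Beyond that bookkeeping, the argument is essentially varietal---it uses the existence of free algebras and the compatibility of congruences with all operations in an essential way---which is precisely why carrying it over to a general regular category is nontrivial and motivates the approximate-co-operation machinery developed later in the paper.
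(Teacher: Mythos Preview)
The paper does not actually prove this theorem: it is stated as a classical result with references \cite{Wille, Schmidt, Hagemann-Mitschke} and closed with a bare \verb|\qed|. So there is no in-paper argument to compare against.

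Your argument is the standard one and is correct. In \textup{(b)}$\Rightarrow$\textup{(a)} you correctly use compatibility of the $v_i$ with the congruences $R$, $S$ to pass from the original tuple $(c_0,\dots,c_n)$ to the ``collapsed'' tuples $(c_0,c_1,c_1,c_3,\dots)$ and $(c_0,c_0,c_2,c_2,\dots)$, and then the parity identities glue the pieces into an $(S,R)_n$-chain; the parity bookkeeping checks out. In \textup{(a)}$\Rightarrow$\textup{(b)} the free-algebra argument is exactly the classical one: the substitution $x_{2k+1}\mapsto x_{2k}$ is a homomorphism $F\to F$ that identifies the generators of $\Theta_R$, hence factors through $F/\Theta_R$ and turns every $\Theta_R$-relation into an identity, and dually for $\Theta_S$. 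Your remark that this is the step that genuinely relies on free algebras (and hence is the one that the approximate-co-operation machinery must replace categorically) is on point; compare the proof of \textup{(c)}$\Rightarrow$\textup{(a)} in Theorem~\ref{imaginary n-permutable 2}, where the role of the free algebra on $n+1$ generators is played by the copower $(n+1)X$ together with the kernel pairs of suitable fold maps.
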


In particular, a Mal'tsev variety has a \defn{Mal'tsev operation} $p$ which satisfies
\begin{equation*}\label{Mal'tsev alg p}
\begin{cases}
 p(x,y,y)=x,\\
 p(x,x,y)=y.
\end{cases}
\end{equation*}
A $3$-permutable variety can be characterised by the existence of two ternary operations, $r$ and~$s$, satisfying the left hand side identities
\begin{equation*}\label{Goursat alg r,s}
\begin{cases}
 r(x,y,y)=x,\\
 r(x,x,y)=s(x,y,y),\\
 s(x,x,y)=y
\end{cases}
\qquad\qquad\qquad
\begin{cases}
 p(x,y,y,z)=x,\\
 p(x,x,y,y)=q(x,x,y,y),\\
 q(x,y,y,z)=z
\end{cases}
\end{equation*}
or, equivalently, by the existence of two quaternary operations, $p$ and~$q$, such that the identities on the right above hold.

\begin{remark}\label{Goursat Remark}\label{(n+1)-ary vs ternary}
The equivalence between the existence of ternary operations and the existence of quaternary operations is given by the identities
\[
p(x,y,z,w)=r(x,y,z)\qquad \text{and}\qquad q(x,y,z,w)=s(y,z,w)
\]
on the one hand,
\[
r(x,y,z)=p(x,y,z,z)\qquad \text{and}\qquad s(x,y,z)=q(x,x,y,z)
\]
on the other. More generally~\cite{Hagemann-Mitschke}, the equivalence between the existence of ternary and the existence of $(n+1)$-ary operations for $n$-permutable varieties is given by the identities
\[
\begin{cases}
v_0(x_0, \dots, x_n)=x_0,\\
v_i(x_0, \dots, x_n)=w_i(x_{i-1},x_i,x_{i+1}),\;\; \text{for $i\in\{1, \dots, n-1\}$,}\\
v_n(x_0, \dots, x_n)=x_n
\end{cases}
\]
and $w_i(x,y,z)=v_i(\underset{i}{\underbrace{x, \dots, x}}, y,\underset{n-i}{\underbrace{z, \dots, z}})$ for $i\in\{1, \dots, n-1\}$.
\end{remark}

As mentioned in the introduction (Theorem~\ref{3 of Hagemann-Mitschke}), J.~Hagemann also obtained alternative characterisations which involve conditions on reflexive relations.

\subsection{$n$-Permutable categories~\cite{Carboni-Kelly-Pedicchio}}\label{n-permutable categories}
A regular category is an \defn{$n$-per\-mu\-ta\-ble category} when the composition of (effective) equivalence relations on a given object is $n$-permutable: for two (effective) equivalence relations $R$ and~$S$ on the same object, we have $(R,S)_n=(S,R)_n$. In fact, it suffices to have one of the inequalities, say $(R,S)_n\leq (S,R)_n$. These categories can be characterised by the condition that, for every reflexive relation $E$, the relation $(E,E^{\op})_{n-1}$ is an equivalence relation or, equivalently, a transitive relation.

\section{Main result}\label{Main result}

In this section we prove a categorical version of Theorem~\ref{2 of Hagemann-Mitschke}, valid in regular categories with binary coproducts. We shall repeatedly use the techniques from Subsection~\ref{Relations} without further mention.

\begin{definition}\label{approximate ops}
Let $\CC$ be a category with binary products. We say that morphisms $w_1$, \dots, $w_{n-1}\colon\map{X^{3}}{A}$ are \defn{approximate Hagemann--Mitschke operations} (on $X$) \defn{with approximation} $\alpha\colon\map{X}{A}$ if the diagram in Figure~\ref{ops} commutes.
\begin{figure}[h]
\[
\vcenter{\xymatrix@!0@R=3em@C=2.2em{
 &&&&& X\ar@{.>}[dd]^-{\alpha}\\
 X^{2}\ar[rrrrru]^-{\pi_1}\ar[dd]_-{1_X\times\Delta_X} & & & & & & & & & & X^{2}\ar[lllllu]_-{\pi_{2}}\ar[dd]^-{\Delta_X\times 1_X}\\
 &&&&& A\\
 X^{3}\ar@{.>}[rrrrru]^-{w_1} & & & & & & & & & & X^{3}\ar@{.>}[lllllu]_-{w_{n-1}} \\
 X^2 \ar[u]^-{\Delta_X\times 1_X} \ar[rr]_-{1_X\times\Delta_X} & & X^{3}\ar@{.>}[rrruu]^-{w_2} &&&&&&
 X^{3} \ar@{.>}[llluu]_-{w_{n-2}} && X^2 \ar[u]_-{1_X\times\Delta_X} \ar[ll]^-{\Delta_X\times 1_X}\\
 && X^{2} \ar[u] \ar[rr] && X^{3} \ar@{.>}[ruuu]^-{w_3} \ar@{}[rrrru]|-{\rotatebox{20}{$\cdots$}}}}
\]
\caption{Approximate Hagemann--Mitschke operations}\label{ops}
\end{figure}
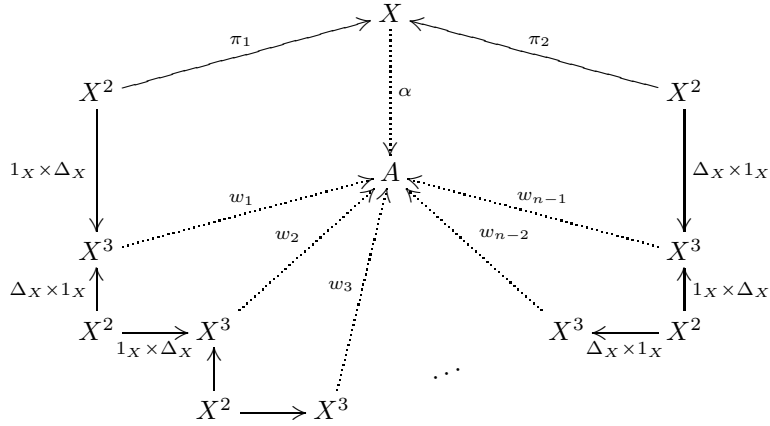

If $(A,\alpha,w_1,\dots,w_{n-1})$ is the colimit of the outer solid diagram, then $w_1$, \dots, $w_{n-1}$ are called \defn{universal} approximate Hagemann--Mitschke operations with approximation $\alpha$.
\end{definition}

The approximation $\alpha$ is uniquely determined by any of its operations $w_i$ since we have $\alpha=w_i\langle 1_X,\dots,1_X\rangle$. To say that $w_1$, \dots, $w_{n-1}$ are approximate Hagemann--Mitschke operations with approximation $\alpha$ is equivalent to having, for every object~$W$ and all morphisms $x_0$, \dots, $x_n\colon\map{W}{X}$, identities which correspond to those given in Theorem~\ref{1 of Hagemann-Mitschke}.

When $\CC$ is a category with binary products and finite colimits, then there always exist universal approximate Hagemann--Mitschke operations given by the colimit of the outer diagram of Figure~\ref{ops}.

If $A=X$ and $\alpha = 1_{X}$ then the $w_1$, \dots, $w_{n-1}$ are ``real'' operations on $X$ which provide~$X$ with an internal structure. For instance, in $\Set$ this means that $(X,w_1,\dots,w_n)$ is a kind of generalised Mal'tsev algebra.

We work in the dual category $\CC^{\op}=\XX^{\XX}$, where $\XX$ is a category with finite limits and binary coproducts. So, (universal) approximate Hagemann--Mitschke operations $w_1$, \dots, $w_{n-1}\colon\map{X^{3}}{A}$ with approximation $\alpha\colon\map{X}{A}$ in $\CC$ are, in fact, \defn{(universal) approximate Hagemann--Mitschke co-operations} $w_1$, \dots, $w_{n-1}\colon\map{A}{3X}$ with approximation $\alpha\colon\map{A}{X}$ in $\CC^{\op}$. We consider the particular case when ${X=1_{\XX}}$. Again, universal approximate co-operations always exist: $w_1$, \dots, $w_{n-1}$ and $\alpha$ are natural transformations defined, for each object $X$ in $\XX$, by the limit of one of the outer solid diagram in Figure~\ref{ternary co-ops}.
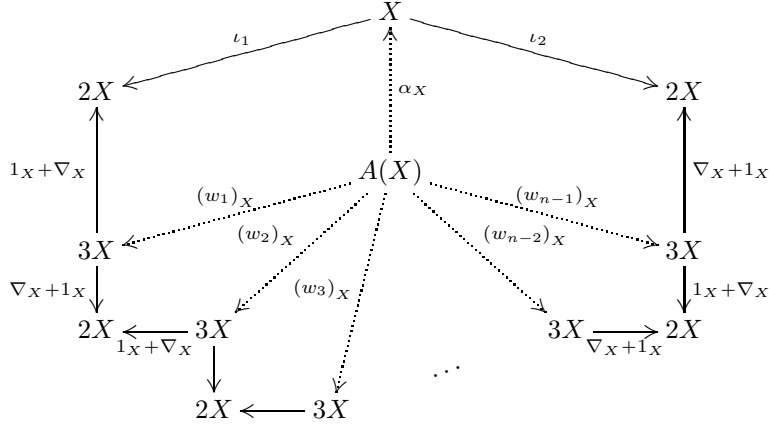
\begin{figure}[h]
\[
\vcenter{\xymatrix@!0@R=3em@C=2.2em{
 &&&&& X \ar[llllld]_-{\iota_1} \ar[rrrrrd]^-{\iota_2} \\
 2X & & & & & & & & & & 2X\\
 &&&&& A(X) \ar@{.>}[uu]_-{\alpha_X} \ar@{.>}[llllld]_-{{(w_1)}_X} \ar@{.>}[llldd]_-{{(w_2)}_X} \ar@{.>}[lddd]_-{{(w_3)}_X}
 \ar@{.>}[rrrdd]^-{{(w_{n-2})}_X} \ar@{.>}[rrrrrd]^-{{(w_{n-1})}_X} \\
 3X \ar[uu]^-{1_X+\nabla_X} \ar[d]_-{\nabla_X+1_X} & & & & & & & & & &
 3X \ar[uu]_-{\nabla_X+ 1_X} \ar[d]^-{1_X+\nabla_X} \\
 2X & & 3X \ar[ll]^-{1_X+\nabla_X} \ar[d] &&&&&&
 3X \ar[rr]_-{\nabla_X+1_X} && 2X \\
 && 2X && 3X \ar[ll] \ar@{}[rrrru]|-{\rotatebox{20}{$\cdots$}} }}
\]
\caption{Approximate Hagemann--Mitschke co-operations}\label{ternary co-ops}
\end{figure}

Now we obtain the claimed categorical version of Theorem~\ref{2 of Hagemann-Mitschke}:

\begin{theorem}\label{imaginary n-permutable 2}
Let $\XX$ be a regular category with binary coproducts. The following statements are equivalent:
\begin{enumerate}
\item the approximation $\alpha\colon A\Rightarrow 1_{\XX}$ of the universal approximate Hagemann--Mitschke co-operations on $1_{\XX}$ is a natural transformation, all of whose components are regular epimorphisms in $\XX$;
\item there exist approximate Hagemann--Mitschke co-operations on $1_{\XX}$ such that the approximation $\alpha\colon A\Rightarrow 1_{\XX}$ is a natural transformation, all of whose components are regular epimorphisms in $\XX$;
\item $\XX$ is an $n$-permutable category.
\end{enumerate}
\end{theorem}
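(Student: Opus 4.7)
The plan is to establish the cycle $(a) \Rightarrow (b) \Rightarrow (c) \Rightarrow (a)$. The first implication is trivial, since the universal approximate Hagemann--Mitschke co-operations given by the limit of the diagram in Figure~\ref{ternary co-ops} provide a concrete witness for (b). For $(b) \Rightarrow (a)$, any co-operations $(A', \alpha', w'_1,\dots,w'_{n-1})$ with $\alpha'$ componentwise regular epi factor through the universal ones: the universal property of the limit $(A, \alpha, w_1, \dots, w_{n-1})$ yields a unique natural transformation $\phi\colon A' \Rightarrow A$ satisfying $\alpha \phi = \alpha'$. Since in the regular category $\XX$ a composite $\alpha_X \phi_X$ that is regular epi forces the outer factor $\alpha_X$ to be regular epi, (a) follows.

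For $(b) \Rightarrow (c)$, I mimic the algebraic proof of Theorem~\ref{2 of Hagemann-Mitschke} in the context of generalised elements. Given equivalence relations $R$, $S$ on an object $Y$, it suffices to show $(R, S)_n \leq (S, R)_n$. Take a generalised element $\langle f, g \rangle \colon Z \to Y \times Y$ of $(R, S)_n$; the description of relation composition in Subsection~\ref{Relations} yields a regular epi $\zeta \colon Z' \twoheadrightarrow Z$ and morphisms $h_0 = f\zeta, h_1, \dots, h_n = g\zeta \colon Z' \to Y$ whose consecutive pairs lie alternately in $R$ and in $S$. Composing with the further regular epi $\alpha_{Z'} \colon A(Z') \twoheadrightarrow Z'$, I define $u_i := [h_{i-1}, h_i, h_{i+1}] \circ (w_i)_{Z'}$ for $1 \leq i \leq n-1$, with $u_0 := h_0 \alpha_{Z'}$ and $u_n := h_n \alpha_{Z'}$. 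The defining identities of the co-operations read off from Figure~\ref{ternary co-ops}---namely $(1_{Z'}+\nabla_{Z'})(w_1)_{Z'} = \iota_1 \alpha_{Z'}$, $(\nabla_{Z'}+1_{Z'})(w_i)_{Z'} = (1_{Z'}+\nabla_{Z'})(w_{i+1})_{Z'}$ and $(\nabla_{Z'}+1_{Z'})(w_{n-1})_{Z'} = \iota_2 \alpha_{Z'}$---combined with the fact that morphisms preserve equivalence relations, imply that consecutive pairs $\langle u_{i-1}, u_i \rangle$ lie alternately in $S$ and in $R$. This exhibits $\langle f, g \rangle$ as belonging to $(S, R)_n$ modulo the regular epi cover $\zeta \circ \alpha_{Z'}$.

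The most intricate step is $(c) \Rightarrow (a)$. For each object $X$ of $\XX$, I consider the reflexive relation $E_X$ on $2X$ generated by the pair $\langle \iota_1, \iota_2 \rangle \colon X \to 2X \times 2X$ together with the diagonal $\Delta_{2X}$. The characterisation of $n$-permutability recalled in Subsection~\ref{n-permutable categories}---that $(E, E^{\op})_{n-1}$ is an equivalence relation for every reflexive $E$---applied to $E_X$ yields, via reflexivity and symmetry of this composite, that $\langle \iota_2, \iota_1 \rangle$ lies in $(E_X, E_X^{\op})_{n-1}$. Unfolding this membership with the description of relation composition produces a regular epi cover $P_X \twoheadrightarrow X$ together with intermediate morphisms $P_X \to 2X$ that, after reorganisation, assemble into approximate Hagemann--Mitschke co-operations $P_X \to 3X$ whose approximation is this regular epi. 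Naturality in $X$ is inherited from the functoriality of the construction of $E_X$. The main obstacle is converting the chain-data, which arises most naturally in the $(n+1)$-ary form corresponding to Theorem~\ref{1 of Hagemann-Mitschke}, into the ternary $3X$-valued format of Figure~\ref{ternary co-ops}; the categorical analogue of Remark~\ref{Goursat Remark} is expected to handle this passage. Once such co-operations exist, the implication $(b) \Rightarrow (a)$ already established upgrades the universal approximation to a regular epi natural transformation, closing the cycle.
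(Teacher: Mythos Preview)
Your cycle $(a)\Rightarrow(b)\Rightarrow(c)\Rightarrow(a)$ and your treatment of $(b)\Rightarrow(c)$ are essentially the paper's: you use the same generalised-element argument, defining the interpolants $u_i$ via the $(w_i)_{Z'}$ and reading off the required memberships from the limit identities. The paper carries out this computation more explicitly, separating the odd and even cases and using reflexivity, symmetry \emph{and} transitivity of $R$ and $S$ at each step; your phrase ``morphisms preserve equivalence relations'' hides exactly those uses, but the content is the same.

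Your $(c)\Rightarrow(a)$, however, takes a different route from the paper and contains a real gap. The paper does not use the Carboni--Kelly--Pedicchio characterisation via $(E,E^{\op})_{n-1}$; instead it works on $(n+1)X$, taking $R$ and $S$ to be the kernel pairs of explicit fold maps such as $k\nabla_X$ and $1_X+(k-1)\nabla_X+1_X$. One then has $\langle\iota_1,\iota_{n+1}\rangle\in (R,S)_n=(S,R)_n$ directly from the definition of $n$-permutability, and the resulting witnesses $x_i\colon Z\to(n+1)X$ are pushed down to $3X$ by the folds $(\nabla_i)_X+1_X+(\nabla_{n-i})_X$, giving a cone on the limit diagram in Figure~\ref{ternary co-ops} without further lifting. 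In your approach the witnesses land in $2X$, and to obtain maps into $3X$ you must first pull back along the regular epimorphism $3X\twoheadrightarrow E_X$---a step you do not mention---and then cope with the fact that the chain alternates between $E_X$ and $E_X^{\op}$, which forces you to insert the twist $3X\to 3X$ at every second link to get the identities $(\nabla+1)w_i=(1+\nabla)w_{i+1}$ to match up. Your appeal to ``the categorical analogue of Remark~\ref{Goursat Remark}'' is a red herring: that remark converts between ternary and $(n+1)$-ary terms, whereas here the data are not $(n+1)$-ary at all but chain-data in $2X$. The approach can be made to work with these fixes, but as written the ``reorganisation'' step is precisely where the argument is missing.
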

\begin{proof}
(a) $\Rightarrow$ (b) is obvious.

(b) $\Rightarrow$ (c) Let $R$ and $S$ be equivalence relations on an object~$Y$. We must prove that $(R,S)_n\leq (S,R)_n$. Let $a$, $b\colon\map{X}{Y}$ be morphisms such that $\langle a,b\rangle\in (R,S)_n$. Then there exists a regular epimorphism $\zeta\colon\regepi{Z}{X}$ together with morphisms $x_1$, \dots, $x_{n-1}\colon\map{Z}{Y}$ such that
\[
\langle a\zeta, x_1\rangle\in R,\;
\langle x_1,x_2\rangle\in S,\;
\dots,\;
\langle x_{n-2},x_{n-1} \rangle\in S\; \text{and}\; \langle x_{n-1}, b\zeta\rangle\in R,
\]
if $n$ is odd. Then the relation $S$ contains the couples $\langle a\zeta, a\zeta\rangle$, $\langle x_1, x_1\rangle$, $\langle x_1,x_2 \rangle$, $\langle x_3,x_3 \rangle$, $\langle x_3,x_4 \rangle$ \dots, $\langle x_{n-4},x_{n-4} \rangle$, $\langle x_{n-4},x_{n-3} \rangle$, $\langle x_{n-2},x_{n-2} \rangle$, $\langle x_{n-2},x_{n-1} \rangle$ and $\langle b\zeta, b\zeta\rangle$. If we compose the first triple $\langle a\zeta, a\zeta\rangle$, $\langle x_1, x_1\rangle$, $\langle x_1,x_2 \rangle$ with $(w_1)_Z$, the second triple $\langle x_1, x_1\rangle$, $\langle x_1,x_2 \rangle$, $\langle x_3,x_3 \rangle$ with $(w_2)_Z$, and so on, we get
\[
\begin{cases}
 \left\langle a\zeta \alpha_Z, \threesom{a\zeta}{x_1}{x_2}{(w_1)}_Z \right\rangle \in S \vspace{3pt}\\
 \left\langle \threesom{x_1}{x_2}{x_3}{(w_2)}_Z, \threesom{x_2}{x_3}{x_4}{(w_3)}_Z \right\rangle \in SS^{\op}=S \vspace{3pt}\\
\qquad\vdots \\
 \left\langle \threesom{x_{n-4}}{x_{n-3}}{x_{n-2}}{(w_{n-3})}_Z, \threesom{x_{n-3}}{x_{n-2}}{x_{n-1}}{(w_{n-2})}_Z \right\rangle \in SS^{\op}=S \vspace{3pt}\\
 \left\langle b\zeta \alpha_Z, \threesom{x_{n-2}}{x_{n-1}}{b\zeta}{(w_{n-1})}_Z\right\rangle \in S
\end{cases}
\]
because $\left(\nabla_Z +1_Z\right){(w_j)}_Z = \left(1_Z+\nabla_Z\right) {(w_{j+1})}_Z$, for $j$ even, $j\in\{2,\dots, n-3\}$. Similarly, since $
\langle a\zeta, a\zeta\rangle$, $\langle x_1, a\zeta \rangle$, $\langle x_2,x_3 \rangle$, $\langle x_3,x_3 \rangle$, $\dots$, $\langle x_{n-3},x_{n-2} \rangle$, $\langle x_{n-2},x_{n-2} \rangle$, $\langle x_{n-1},b\zeta \rangle$ and $\langle b\zeta, b\zeta\rangle$ are all in $R$, we get
\[
\begin{cases}
 \left\langle \threesom{a\zeta}{x_1}{x_2}{(w_1)}_Z, \threesom{x_1}{x_2}{x_3}{(w_2)}_Z \right\rangle \in R^{\op}R=R \vspace{3pt}\\
\qquad\vdots \\
 \left\langle \threesom{x_{n-3}}{x_{n-2}}{x_{n-1}}{(w_{n-2})}_Z, \threesom{x_{n-2}}{x_{n-1}}{b\zeta}{(w_{n-1})}_Z \right\rangle \in R^{\op}R=R.
\end{cases}
\]
We can now conclude that $\langle a\zeta \alpha_Z,b\zeta \alpha_Z \rangle \in (S,R)_n$, which implies that $\langle a,b \rangle\in (S,R)_n$, since~$\zeta$ and $\alpha_Z$ are regular epimorphisms.

For $n$ even the proof is similar. Now we have $\langle a\zeta, x_1\rangle\in S$, $\langle x_1,x_2\rangle\in R$, \dots, $\langle x_{n-2},x_{n-1} \rangle\in S$ and $\langle x_{n-1}, b\zeta\rangle\in R$ and should consider $\langle a\zeta, a\zeta\rangle$, $\langle x_1, x_1\rangle$, $\langle x_1,x_2 \rangle$, $\langle x_3, x_3\rangle$, $\dots$, $\langle x_{n-3},x_{n-3} \rangle$, $\langle x_{n-3},x_{n-2} \rangle$, $\langle x_{n-1},x_{n-1} \rangle$, $\langle x_{n-1}, b\zeta\rangle$ $\in R$
and
$\langle a\zeta, a\zeta\rangle$, $\langle x_1, a\zeta \rangle$, $\langle x_2,x_3 \rangle$, $\langle x_3,x_3 \rangle$, $\dots$, $\langle x_{n-3},x_{n-3} \rangle$, $\langle x_{n-2},x_{n-1} \rangle$, $\langle x_{n-1},x_{n-1} \rangle$, $\langle b\zeta, b\zeta\rangle$ $\in S$
and proceed as above.

(c) $\Rightarrow$ (a) We must prove that $\alpha_X$ in the limit diagram of Figure~\ref{ternary co-ops} is a regular epimorphism for every object $X$ in $\XX$. If $n$ is odd, we take $k=\tfrac{n+1}{2}$ and let $R$ be the kernel relation of $k\nabla_X$ and $S$ the kernel relation of $1_X+(k-1)\nabla_X+1_X$, and if~$n$ is even, we take $R$ and $S$ to be the respective kernel relations of $1_X+k\nabla_X$ and $k\nabla_X+1_X$ where $k=\tfrac{n}{2}$. For the coproduct inclusions $\iota_1$, \dots, $\iota_{n+1}\colon\map{X}{(n+1)X}$ we have
\[
\langle\iota_1,\iota_{n+1}\rangle\in (R,S)_n=(S,R)_n.
\]
So there exist a regular epimorphism $\zeta\colon\regepi{Z}{X}$ as well as morphisms $x_1$, \dots, $x_{n-1}\colon\map{Z}{(n+1)X}$ such that
\[
\begin{cases}
\langle\iota_1\zeta, x_1\rangle\in S,\; \langle x_1, x_2\rangle\in R,\; \langle x_2, x_3\rangle\in S,\; \dots,\\
\quad\langle x_{n-2},x_{n-1}\rangle\in R,\; \langle x_{n-1},\iota_{n+1}\zeta\rangle \in S,\; & \text{$n$ odd}\\\\
\langle\iota_1\zeta, x_1\rangle\in R,\; \langle x_1, x_2\rangle\in S,\; \langle x_2, x_3\rangle\in R,\; \dots,\\
\quad\langle x_{n-2},x_{n-1}\rangle\in R,\; \langle x_{n-1},\iota_{n+1}\zeta\rangle \in S,\; & \text{$n$ even.}
\end{cases}
\]
The regular epimorphism $\zeta\colon \regepi{Z}{X}$ and the morphisms
\[
\left((\nabla_i)_X+1_X+(\nabla_{n-i})_X\right) x_i\colon Z\to 3X,\;\; i\in \{1, \dots, n-1\},
\]
give a cone of the outer diagram in Figure~\ref{ternary co-ops}: see Remark~\ref{(n+1)-ary vs ternary}. Here ${(\nabla_i)}_X=[1_X, \dots, 1_X ]^{\T}\colon iX\to X$; in particular, ${(\nabla_1)}_X=1_X$ and ${(\nabla_2)}_X=\nabla_X$. This guarantees the existence of a unique morphism $\lambda\colon\map{Z}{B(X)}$ such that, in particular, $\alpha_X=\zeta \lambda$. Hence $\alpha_X$ is a regular epimorphism.
\end{proof}


\section{Applications}\label{Applications}

We now extend Theorem~\ref{1 of Hagemann-Mitschke} to the context of $n$-permutable categories. As for $n$-permutable varieties (Remark~\ref{(n+1)-ary vs ternary}), we also have a correspondence between approximate Hagemann--Mitschke co-op\-er\-a\-tions and certain $(n+1)$-ary co-op\-er\-a\-tions. Similarly, (universal) approximate $(n+1)$-ary co-operations $v_1$, \dots, $v_{n-1}$ with approximation $\beta$ are natural transformations defined, for each object $X$ in $\XX$, by the (limit of the outer solid) commutative diagrams in Figure~\ref{co-ops odd}, when $n$ is odd, and in Figure~\ref{co-ops even}, when $n$ is even.
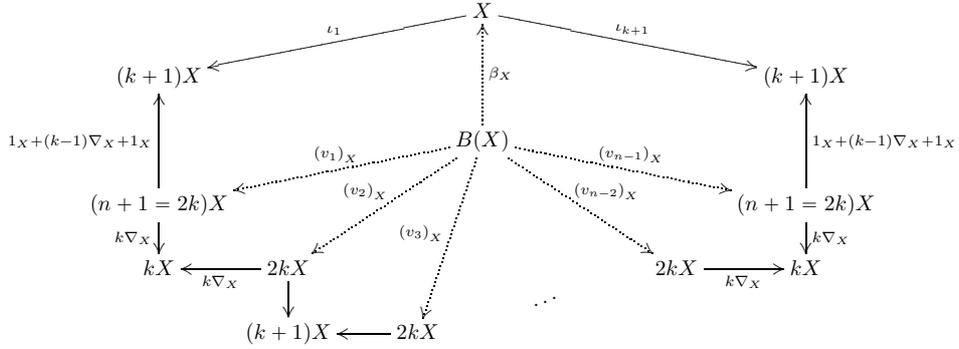
\begin{figure}[h]
\[
\resizebox{\textwidth}{!}
{\xymatrix@!0@R=3em@C=3em{
 &&&&& X \ar[llllld]_-{\iota_1} \ar[rrrrrd]^-{\iota_{k+1}} \\
 (k+1)X & & & & & & & & & & (k+1)X\\
 &&&&& B(X) \ar@{.>}[uu]_-{\beta_X} \ar@{.>}[llllld]_-{{(v_1)}_X} \ar@{.>}[llldd]_-{{(v_2)}_X} \ar@{.>}[lddd]_-{{(v_3)}_X}
 \ar@{.>}[rrrdd]^-{{(v_{n-2})}_X} \ar@{.>}[rrrrrd]^-{{(v_{n-1})}_X} \\
 (n+1=2k)X \ar[uu]^-{1_X+(k-1)\nabla_X+ 1_X} \ar[d]_-{k\nabla_X} & & & & & & & & & &
 (n+1=2k)X \ar[uu]_-{1_X+(k-1)\nabla_X+ 1_X} \ar[d]^-{k\nabla_X} \\
 kX & & 2kX \ar[ll]^-{k\nabla_X} \ar[d] &&&&&&
 2kX \ar[rr]_-{k\nabla_X} && kX \\
 && (k+1)X && 2kX \ar[ll] \ar@{}[rrrru]|-{\rotatebox{20}{$\cdots$}} }}
\]
\caption{Approximate $(n+1)$-ary co-operation, odd case (${n=2k-1}$ for $k\geq 2$)}\label{co-ops odd}
\end{figure}
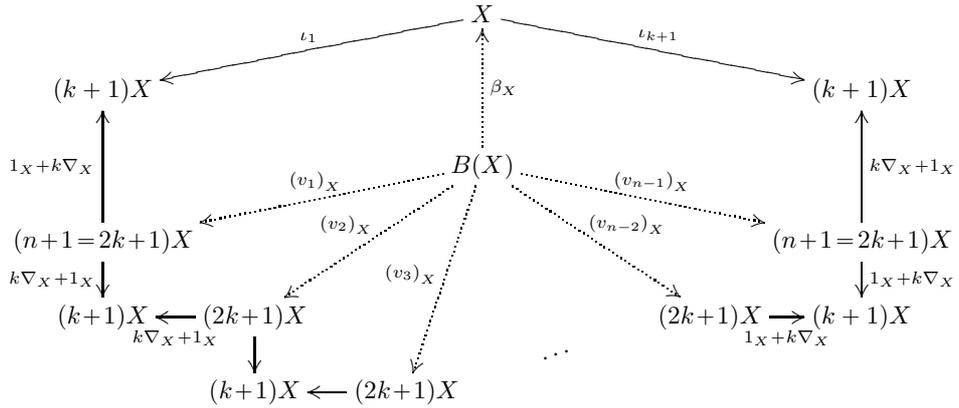
\begin{figure}[h]
\[
\resizebox{\textwidth}{!}
{\xymatrix@!0@R=3em@C=3em{
 &&&&& X \ar[llllld]_-{\iota_1} \ar[rrrrrd]^-{\iota_{k+1}} \\
 (k+1)X & & & & & & & & & & (k+1)X\\
 &&&&& B(X) \ar@{.>}[uu]_-{\beta_X} \ar@{.>}[llllld]_-{{(v_1)}_X} \ar@{.>}[llldd]_-{{(v_2)}_X} \ar@{.>}[lddd]_-{{(v_3)}_X}
 \ar@{.>}[rrrdd]^-{{(v_{n-2})}_X} \ar@{.>}[rrrrrd]^-{{(v_{n-1})}_X} \\
 (n\!+\!1\!=\!2k\!+\!1)X \ar[uu]^-{1_X+k\nabla_X} \ar[d]_-{k\nabla_X+1_X} & & & & & & & & & &
 (n\!+\!1\!=\!2k\!+\!1)X \ar[uu]_-{k\nabla_X+ 1_X} \ar[d]^-{1_X+k\nabla_X} \\
 (k\!+\!1)X & & (2k\!+\!1)X \ar[ll]^-{\stackrel{}{k\nabla_X+ 1_X}} \ar[d] &&&&&&
 (2k\!+\!1)X \ar[rr]_-{\stackrel{}{1_X+k\nabla_X}} && (k+1)X \\
 && (k\!+\!1)X && (2k\!+\!1)X \ar[ll] \ar@{}[rrrru]|-{\rotatebox{20}{$\cdots$}} }}
\]
\caption{Approximate $(n+1)$-ary co-operation, even case (${n=2k}$ for $k\geq 2$)}\label{co-ops even}
\end{figure}

\begin{theorem}\label{imaginary n-permutable 1}
Let $\XX$ be a regular category with binary coproducts. The following statements are equivalent:
\begin{enumerate}
\item the approximation $\beta\colon B\Rightarrow 1_{\XX}$ of the universal approximate $(n+1)$-ary co-operations in figures~\ref{co-ops odd} and \ref{co-ops even} is a natural transformation, all of whose components are regular epimorphisms in $\XX$;
\item there exist approximate $(n+1)$-ary co-operations as in figures~\ref{co-ops odd} and \ref{co-ops even} such that the approximation $\beta\colon B\Rightarrow 1_{\XX}$ is a natural transformation, all of whose components are regular epimorphisms in $\XX$;
\item $\XX$ is an $n$-permutable category.
\end{enumerate}
\end{theorem}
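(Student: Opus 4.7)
The plan is to reduce the statement to Theorem~\ref{imaginary n-permutable 2} via the categorical analogue of the translation between ternary and $(n+1)$-ary operations formulated in Remark~\ref{(n+1)-ary vs ternary}. The implication (a) $\Rightarrow$ (b) is immediate, so the work lies in (b) $\Rightarrow$ (c) and (c) $\Rightarrow$ (a).

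For (b) $\Rightarrow$ (c), I start from approximate $(n+1)$-ary co-operations $(v_1)_X$, \ldots, $(v_{n-1})_X$ with approximation $\beta_X$ componentwise a regular epimorphism. Mimicking the varietal identity $w_i(x,y,z) = v_i(\underbrace{x, \ldots, x}_{i}, y, \underbrace{z, \ldots, z}_{n-i})$, I would define
\[
\widetilde{w}_i := \bigl((\nabla_i)_X + 1_X + (\nabla_{n-i})_X\bigr) \circ (v_i)_X \colon B(X) \to 3X,
\]
using the codiagonal notation from the proof of Theorem~\ref{imaginary n-permutable 2}. A direct check --- matching each commutativity requirement of Figure~\ref{ternary co-ops} to the corresponding one in Figure~\ref{co-ops odd} (resp.~Figure~\ref{co-ops even}) post-composed with suitable codiagonals --- should show that $\widetilde{w}_1, \ldots, \widetilde{w}_{n-1}$ are approximate Hagemann--Mitschke co-operations on $X$ with approximation $\beta_X$. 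Theorem~\ref{imaginary n-permutable 2}(b) $\Rightarrow$ (c) then delivers the $n$-permutability of $\XX$.

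For (c) $\Rightarrow$ (a), Theorem~\ref{imaginary n-permutable 2}(c) $\Rightarrow$ (a) first produces universal approximate Hagemann--Mitschke co-operations $(w_i)_X \colon A(X) \to 3X$ with approximation $\alpha_X$ componentwise a regular epimorphism. Dualising the formula $v_i(x_0, \ldots, x_n) = w_i(x_{i-1}, x_i, x_{i+1})$, I would define $\widetilde{v}_i \colon A(X) \to (n+1)X$ as the composite of $(w_i)_X$ with the morphism $3X \to (n+1)X$ induced by the coproduct inclusions $\iota_{i-1}$, $\iota_i$, $\iota_{i+1}$. Together with $\alpha_X$, these should form a cone over the limit diagram defining $B(X)$, yielding a unique factorisation $\lambda \colon A(X) \to B(X)$ with $\beta_X \circ \lambda = \alpha_X$. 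In a regular category, any morphism through which a regular epimorphism factors is itself a regular epimorphism --- the monomorphism part of its image factorisation inherits a right inverse from the coequaliser property of $\alpha_X$ and is thus an isomorphism --- so $\beta_X$ is a regular epimorphism.

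The main technical obstacle will be the verification that $(\alpha_X, \widetilde{v}_1, \ldots, \widetilde{v}_{n-1})$ indeed defines a cone over the limit diagram of Figure~\ref{co-ops odd} (resp.~Figure~\ref{co-ops even}). This will require carefully aligning the codiagonals $k\nabla_X$ and $1_X + (k-1)\nabla_X + 1_X$ appearing in those figures with the coproduct inclusions entering the definition of $\widetilde{v}_i$, and reading off the required equalities from the known commutativity relations among the $(w_i)_X$. The argument splits according to the parity of $n$, but in each case it should reduce to a routine diagram chase guided by the dictionary of Remark~\ref{(n+1)-ary vs ternary}.
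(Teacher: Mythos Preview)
Your proposal is correct and follows essentially the same route as the paper: both directions use the translation of Remark~\ref{(n+1)-ary vs ternary}, with $\widetilde{w}_i=\bigl((\nabla_i)_X+1_X+(\nabla_{n-i})_X\bigr)(v_i)_X$ for one direction and $\widetilde{v}_i=\threesom{\iota_i}{\iota_{i+1}}{\iota_{i+2}}(w_i)_X$ for the other (note the paper indexes the inclusions $\iota_1,\dots,\iota_{n+1}$, so your $\iota_{i-1},\iota_i,\iota_{i+1}$ should be shifted by one), then factors the known regular epimorphism through the universal approximation to conclude. The only organisational difference is that the paper proves directly that condition~(a) here is equivalent to condition~(a) of Theorem~\ref{imaginary n-permutable 2}, whereas you route (b) through (b) of that theorem; the underlying constructions and verifications are identical.
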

\begin{proof}
It suffices to show that condition (a) is equivalent to condition (a) from Theorem~\ref{imaginary n-permutable 2}. First, we suppose that Figure~\ref{co-ops odd}, for $n$ odd, or Figure~\ref{co-ops even}, for $n$ even, represents a limit where $\beta_X$ is a regular epimorphism. For the limit of the outer diagram in Figure~\ref{ternary co-ops}, we want to prove that $\alpha_X$ is a regular epimorphism. As in Remark~\ref{(n+1)-ary vs ternary}, the regular epimorphism $\beta_X\colon \regepi{B(X)}{X}$ and the morphisms
\[
\left((\nabla_i)_X+1_X+(\nabla_{n-i})_X\right) {(v_i)}_X\colon B(X)\to 3X,\;\; i\in \{1, \dots, n-1\},
\]
give another cone on the outer diagram of Figure~\ref{ternary co-ops}. Here, once again, ${(\nabla_i)}_X=[1_X, \dots, 1_X ]^{\T}\colon iX\to X$. This guarantees the existence of a unique morphism $\lambda\colon\map{B(X)}{A(X)}$ such that, in particular, $\beta_X=\alpha_X\lambda$. Hence $\alpha_X$ is a regular epimorphism.

Conversely, suppose that Figure~\ref{ternary co-ops} represents a limit where $\alpha_X$ is a regular epimorphism. The regular epimorphism $\alpha_X\colon \regepi{A(X)}{X}$ and the morphisms
\[
 \threesom{\iota_i}{\iota_{i+1}}{\iota_{i+2}}{(w_i)}_X\colon A(X)\to (n+1)X,\;\; i\in \{1, \dots, n-1\},
\]
 for the coproduct inclusions $\iota_1, \dots, \iota_{n+1}\colon X\to (n+1)X$, give another cone on the outer diagram of Figure~\ref{co-ops odd}, for $n$ odd, or Figure~\ref{co-ops even}, for $n$ even. Consequently, $\beta_X$ is a regular epimorphism.
\end{proof}

We end with a categorical version of Theorem~\ref{3 of Hagemann-Mitschke}.

\begin{lemma}\label{(2n-2)-permutable} Let $\XX$ be a regular category such that, for some natural number $n\geq 2$, we have $E^{n}\leq E^{n-1}$ for every reflexive relation~$E$. Then $\XX$ is $(2n-2)$-permutable.
\end{lemma}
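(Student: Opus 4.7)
The plan is to apply the hypothesis $E^{n}\leq E^{n-1}$ to the reflexive composite $E=RS$ built from a pair of equivalence relations $R$ and $S$ on a common object of $\XX$. The relation $RS$ is reflexive because both $R$ and $S$ are, and its iterated powers collapse to alternating composites: tautologically $(RS)^{m}=(R,S)_{2m}$ for every $m\geq 1$. Specialising the hypothesis to $m=n$ and $m=n-1$ therefore yields
\[
(R,S)_{2n}\leq (R,S)_{2n-2}.
\]

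Next I would use reflexivity of $R$ and $S$ to pad $(R,S)_{2n-2}$ on both sides. Given any pair $\langle a,b\rangle$ belonging to a relation $T$, reflexivity of $S$ gives $\langle a,a\rangle\in S$, and the description of composites recalled in Subsection~\ref{Relations} produces $\langle a,b\rangle\in ST$; analogously $T\leq TR$. Since $(R,S)_{2n-2}$ starts with $R$ and ends with $S$, sandwiching an $S$-factor on the left and an $R$-factor on the right produces an alternating composite of length $2n$ starting with $S$, that is,
\[
(R,S)_{2n-2}\leq S\cdot (R,S)_{2n-2}\cdot R=(S,R)_{2n}.
\]
Applying the first step once more, but now to the reflexive relation $SR$, yields $(S,R)_{2n}\leq (S,R)_{2n-2}$. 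Chaining these inequalities,
\[
(R,S)_{2n-2}\leq (S,R)_{2n}\leq (S,R)_{2n-2},
\]
and by the remark in Subsection~\ref{n-permutable categories} that a single such inequality suffices, this is exactly $(2n-2)$-permutability of $\XX$.

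I do not foresee a real obstacle here: the whole argument rests on the tautology $(RS)^{m}=(R,S)_{2m}$ together with the padding lemma for reflexive composites. The one delicate point is keeping the bookkeeping of alternating factors consistent, so as to be sure that prepending an $S$ and appending an $R$ to $(R,S)_{2n-2}$ really produces $(S,R)_{2n}$ and not an alternating composite of a different length or with a different starting letter. Once that parity check is written down, the proof is essentially two lines.
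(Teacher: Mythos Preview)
Your argument is correct and is essentially the paper's proof: the paper also pads $(R,S)_{2n-2}$ to $S(R,S)_{2n-2}R=(S,R)_{2n}$ and then applies the hypothesis to the reflexive relation $E=SR$ to obtain $(S,R)_{2n}\leq (S,R)_{2n-2}$. Your first displayed inequality $(R,S)_{2n}\leq (R,S)_{2n-2}$, coming from $E=RS$, is never actually used in your final chain and can be dropped.
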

\begin{proof}
Let $R$ and $S$ be equivalence relations on a given object $Y$. For $E=SR$, we have $(S,R)_{2n}\leq (S,R)_{2n-2}$ by assumption. But
\[
(R,S)_{2n-2}\leq S(R,S)_{2n-2}R=(S,R)_{2n}\leq (S,R)_{2n-2},
\]
which proves our claim.
\end{proof}

\begin{theorem}\label{imaginary total thm}
Let $\XX$ be a regular category with binary coproducts. The following statements are equivalent:
\begin{enumerate}
\item $\XX$ is an $n$-permutable category;
\item for every reflexive relation $R$, we have $R^{\op}\leq R^{n-1}$;
\item for every reflexive relation $R$, we have $R^{n}\leq R^{n-1}$.
\end{enumerate}
\end{theorem}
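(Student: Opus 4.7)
My plan is to establish the equivalences cyclically via $(a)\Rightarrow (b)\Rightarrow (c)\Rightarrow (a)$, relying on the approximate co-operations of Theorems~\ref{imaginary n-permutable 2} and~\ref{imaginary n-permutable 1} together with Lemma~\ref{(2n-2)-permutable}.

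For $(a)\Rightarrow (b)$, I would apply Theorem~\ref{imaginary n-permutable 2} to obtain approximate Hagemann--Mitschke co-operations $(w_i)_T\colon A(T)\to 3T$ with regular-epimorphic approximation $\alpha_T$. Given a pair $\langle a,b\rangle\colon T\to Y\times Y$ in a reflexive relation $R$, I would define morphisms $c_i\colon A(T)\to Y$ mimicking the varietal chain: $c_0=[a,a,b]\circ(w_{n-1})_T$ and, for $i\in\{1,\ldots,n-1\}$, $c_i=[a,b,b]\circ(w_{n-i})_T$. The identity $w_j(x,x,y)=w_{j+1}(x,y,y)$ encoded in Figure~\ref{ternary co-ops} allows each consecutive pair to be rewritten as $\langle[a,a,b],[a,b,b]\rangle\circ(w_{n-i})_T$, which factors through $R$ because its three componentwise pairs $\langle a,a\rangle$, $\langle a,b\rangle$, $\langle b,b\rangle$ all lie in $R$. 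The outer identities $w_1(x,y,y)=x$ and $w_{n-1}(x,x,y)=y$ evaluate the endpoints to $c_0=b\circ\alpha_T$ and $c_{n-1}=a\circ\alpha_T$; regular-epiness of $\alpha_T$ then yields $\langle b,a\rangle\in R^{n-1}$, proving $R^{\op}\leq R^{n-1}$.

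For $(b)\Rightarrow (c)$, I would proceed by first deducing $(a)$ from $(b)$ and then deriving $(c)$ from $(a)$. For $(b)\Rightarrow (a)$, I would apply $(b)$ to a reflexive relation on $(n+1)X$ of ``reverse-jump'' type (generated by $\Delta_{(n+1)X}$ together with $\langle\iota_{n+1},\iota_1\rangle$, whose opposite contains $\langle\iota_1,\iota_{n+1}\rangle$): the inequality $R^{\op}\leq R^{n-1}$ delivers a regular epimorphism $\zeta\colon Z\to X$ and a chain of length $n-1$ in $R$ whose data assembles into a cone on the outer diagram of Figure~\ref{ternary co-ops}, so Theorem~\ref{imaginary n-permutable 2} gives $n$-permutability. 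Then $(a)\Rightarrow (c)$ is a direct chain compression using ternary co-operations: given $a=b_0,\ldots,b_n=c$ witnessing $\langle a,c\rangle\in R^n$, set $d_i=w_i(b_{i-1},b_i,b_i)$ and $e_i=w_i(b_i,b_i,b_{i+1})$; the identity $w_i(x,x,y)=w_{i+1}(x,y,y)$ gives $e_i=d_{i+1}$, while $d_1=a$, $e_{n-1}=c$, and each $\langle d_i,e_i\rangle$ lies in $R$ via $(w_i)_T$ applied to the pairs $\langle b_{i-1},b_i\rangle$, $\langle b_i,b_i\rangle$, $\langle b_i,b_{i+1}\rangle$, producing a length-$(n-1)$ chain from $a$ to $c$.

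For $(c)\Rightarrow (a)$, I would begin with Lemma~\ref{(2n-2)-permutable}, which already yields $(2n-2)$-permutability, and then sharpen to $n$-permutability by applying $(c)$ to the reflexive relation $R$ on $(n+1)X$ generated by $\Delta_{(n+1)X}$ and the forward jumps $\langle\iota_j,\iota_{j+1}\rangle$ for $j=1,\ldots,n$. Since $\langle\iota_1,\iota_{n+1}\rangle\in R^n$ through the obvious jump chain, $(c)$ yields a regular epimorphism $\zeta\colon Z\to X$ and intermediate morphisms whose combined data---an alternation of identities and single-index jumps---assembles into a cone on the outer diagram of Figure~\ref{ternary co-ops}, witnessing $\alpha_X$ as a regular epimorphism, and Theorem~\ref{imaginary n-permutable 2} then delivers $n$-permutability. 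The main obstacle is the cone-extraction common to $(b)\Rightarrow (a)$ and $(c)\Rightarrow (a)$: given a length-$(n-1)$ chain in a reflexive relation on $(n+1)X$ whose steps are either identities or coproduct-jumps, one must package this combinatorial data into morphisms $Z\to 3X$ fitting the $(w_i)_X$'s of Figure~\ref{ternary co-ops}, using the coproduct inclusions and codiagonals to bridge the discrete chain structure and the categorical limit $A(X)$, with care required to align the parity of the chain pattern with the two cases $n$ odd and $n$ even.
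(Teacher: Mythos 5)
Your outline of $(a)\Rightarrow(b)$ and of the chain-compression argument for $(a)\Rightarrow(c)$ matches the paper's proof, but the two implications that recover $n$-permutability from the relation conditions contain a genuine gap, which you yourself flag as ``the main obstacle'': the cone-extraction is not an afterthought, it is the whole content of those implications, and the relations you propose to feed into conditions (b) and (c) are not the right ones. For $(b)\Rightarrow(a)$ you take the reflexive relation on $(n+1)X$ generated by the diagonal and the single pair $\langle\iota_{n+1},\iota_1\rangle$. Translated into a variety, an element of this relation is a pair $\bigl(t(x_1,\dots,x_{n+1},x_{n+1}),\,t(x_1,\dots,x_{n+1},x_1)\bigr)$ for an $(n+2)$-ary term $t$, so the chain produced by $R^{\op}\leq R^{n-1}$ yields identities in which only the \emph{last} argument slot of each $t_i$ is toggled between $x_1$ and $x_{n+1}$. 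The Hagemann--Mitschke identities, by contrast, require the ``doubled'' argument to shift position from one operation to the next ($w_i(x,x,y)=w_{i+1}(x,y,y)$), and no substitution turns the former system into the latter; already for $n=2$ your relation produces a term satisfying $p(x,y,y)=x$ and $p(x,y,x)=y$, which is not a Mal'tsev operation. The paper instead uses the reflexive relation on $2X$ obtained as the regular image of $\langle\nabla_X+1_X,\,1_X+\nabla_X\rangle\colon 3X\to 2X\times 2X$; its elements are exactly the pairs $\langle(\nabla_X+1_X)t,(1_X+\nabla_X)t\rangle$ with $t\colon Z\to 3X$, so a length-$(n-1)$ chain from $\iota_2$ to $\iota_1$ \emph{is}, after pulling back the product of the regular epimorphisms $\regepi{3X}{R}$ along $\langle k_1,\dots,k_{n-1}\rangle$, a cone on the outer diagram of Figure~\ref{ternary co-ops}. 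No combinatorial packaging is needed once the relation is chosen correctly.

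For $(c)\Rightarrow(a)$ your plan raises the same unresolved extraction problem for the forward-jump relation on $(n+1)X$, and the appeal to Lemma~\ref{(2n-2)-permutable} plays no role in the argument you actually sketch. The paper avoids a direct proof of $(c)\Rightarrow(a)$ altogether: Lemma~\ref{(2n-2)-permutable} gives $(2n-2)$-permutability, the already-established equivalence $(a)\Leftrightarrow(b)$ applied with $2n-2$ in place of $n$ gives $R^{\op}\leq R^{2n-3}$ for every reflexive $R$, and then $n-2$ applications of $R^{n}\leq R^{n-1}$ bring this down to $R^{\op}\leq R^{n-1}$, i.e.\ condition (b), whence (a) by the implication $(b)\Rightarrow(a)$. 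You could adopt this purely calculational route verbatim once your $(b)\Rightarrow(a)$ is repaired as above.
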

\begin{proof}
(a) $\Rightarrow$ (b) Let $R$ be a reflexive relation on $Y$ and consider morphisms $x$, $y\colon\map{X}{Y}$ such that $\langle x,y\rangle\in R^{\op}$; hence $\langle y, x\rangle\in R$. Since $\XX$ is an $n$-permutable category, there exist approximate Hagemann--Mitschke co-operations $w_1$, \dots, $w_{n-1}$ with approximation $\alpha$. These are defined, for each object $X$ in $\XX$, as in Figure~\ref{ternary co-ops}, where $\alpha_X$ is a regular epimorphism. Since $R$ is a reflexive relation, we have $\langle x, x\rangle$, $\langle y,x\rangle$, $\langle y, y\rangle\in R$, so that also
\[
\left\langle\threesom{x}{y}{y},\threesom{x}{x}{y}\right\rangle\in R.
\]
Precomposing with each approximate co-operation, we get
\[
\begin{cases}
\left\langle x\alpha_X,\threesom{x}{x}{y} {(w_1)}_X \right\rangle\in R \vspace{3pt}\\
\left\langle \threesom{x}{y}{y} {(w_2)}_X ,\threesom{x}{x}{y} {(w_2)}_X \right\rangle\in R \vspace{3pt}\\
\qquad\vdots \\
\left\langle \threesom{x}{y}{y} {(w_{n-2})}_X ,\threesom{x}{x}{y} {(w_{n-2})}_X \right\rangle\in R \vspace{3pt}\\
\left\langle \threesom{x}{y}{y}{(w_{n-1})}_X, y\alpha_X \right\rangle\in R.
\end{cases}
\]
From $(\nabla_X+1_X){(w_j)}_X = (1_X+\nabla_X){(w_{j+1})}_X$, for $j\in \{1,\dots, n-2\}$, we can conclude that
\[
\langle x\alpha_X, y\alpha_X\rangle =\langle x,y\rangle\alpha_X\in R^{n-1}.
\]
So $\langle x,y\rangle\in R^{n-1}$, since $\alpha_X$ is a regular epimorphism.

(b) $\Rightarrow$ (a) For any object $X$ in $\XX$, consider the following reflexive graph and the reflexive relation $R$ on $2X$ which results by taking the (regular epi, mono) factorisation in
\[
\xymatrix@!0@R=5em@C=5em{
 3X\ar@<1.25ex>[rr]^-{\nabla_X+1_X}\ar@<-.75ex>[rr]_-{1_X+\nabla_X}\ar@{>>}[dr]_-{\pi} & & 2X.\ar@<-.25ex>[ll]\ar@<.5ex>[dl]\\
 & R\ar@<.5ex>[ur]^(.3){r_1}\ar@<-1.5ex>[ur]_(.3){r_2} }
\]
 From $\langle\iota_1,\iota_2\rangle\in R$ we get $\langle\iota_2,\iota_1\rangle\in R^{\op}\leq R^{n-1}$. So, there exists a regular epimorphism $\zeta\colon\regepi{Z}{X}$ together with morphisms $x_1$, \dots, $x_{n-2}\colon\map{Z}{2X}$ such that
 \[
 \langle\iota_2\zeta, x_1\rangle,\; \langle x_1,x_2 \rangle\;, \dots,\; \langle x_{n-3},x_{n-2}\rangle,\; \langle x_{n-2},\iota_1\zeta\rangle\in R.
\]
Let $k_1$, \dots, $k_{n-1}\colon\map{Z}{R}$ be the morphisms such that $\langle r_1, r_2\rangle k_1 =\langle\iota_2\zeta, x_1\rangle$, $\langle r_1, r_2\rangle k_i =\langle x_{i-1}, x_i\rangle$, $i\in \{2, \dots, n-2\}$, and $\langle r_1, r_2\rangle k_{n-1} =\langle x_{n-2},\iota_1\zeta\rangle$. From the pullback
\[
\xymatrix@!0@R=5em@C=12em{
 A(X)\ar[r]^-{\langle {(w_{n-1})}_X, \dots, {(w_1)}_X \rangle}\ar@{>>}[d]_-{\pi'} \pullback & (3X)^{n-1} \ar@{>>}[d]^-{\pi^{n-1}}\\
 Z\ar[r]_-{\langle k_1, \dots, k_{n-1} \rangle} & R^{n-1} }
\]
we get morphisms ${(w_1)}_X$, \dots, ${(w_{n-1})}_X$ and a regular epimorphism defined by $\alpha_X=\zeta\pi'$ such that the diagram in Figure~\ref{ternary co-ops} commutes. Then $\XX$ is an $n$-permutable category by Theorem~\ref{imaginary n-permutable 2}.

(a) $\Rightarrow$ (c) Let $R$ be a reflexive relation on $Y$ and consider morphisms $a$, $b\colon {X\to Y}$ such that $\langle a,b \rangle \in R^n$. Then there exists a regular epimorphism $\zeta\colon \regepi{Z}{X}$ together with morphisms $x_1, \dots, x_{n-1}\colon Z\to Y$ such that $\langle a\zeta, x_1 \rangle$, $\langle x_1,x_2 \rangle$, \dots, $\langle x_{n-2}, x_{n-1} \rangle$, $\langle x_{n-1}, b\zeta \rangle\in R$. Since $\XX$ is an $n$-permutable category, there are approximate Hagemann--Mitschke co-operations $w_1$, \dots, $w_{n-1}$ with approximation~$\alpha$ defined, for each object $X$ in $\XX$, as in Figure~\ref{ternary co-ops}, where $\alpha_X$ is a regular epimorphism. Since $R$ is a reflexive relation, we have
\[
\resizebox{\textwidth}{!}
{$\begin{aligned}
 \langle a\zeta, x_1 \rangle, \langle x_1, x_1 \rangle, \langle x_1,x_2 \rangle \in R
 &\Rightarrow \left \langle a\zeta \alpha_X, \threesom{x_1}{x_1}{x_2}{(w_1)}_X \right\rangle \in R, \vspace{3pt}\\
 \langle x_1,x_2 \rangle, \langle x_2, x_2 \rangle, \langle x_2,x_3 \rangle \in R
 &\Rightarrow \left \langle \threesom{x_1}{x_2}{x_2}{(w_2)}_X, \threesom{x_2}{x_2}{x_3}{(w_2)}_X \right\rangle \in R, \vspace{3pt}\\
&\;\;\vdots \\
 \langle x_{n-3},x_{n-2} \rangle, \langle x_{n-2}, x_{n-2} \rangle, \langle x_{n-2},x_{n-1} \rangle \in R
 &\Rightarrow \left \langle \threesom{x_{n-3}}{x_{n-2}}{x_{n-2}}{(w_{n-2})}_X, \threesom{x_{n-2}}{x_{n-2}}{x_{n-1}}{(w_{n-2})}_X \right\rangle \in R, \vspace{3pt}\\
 \langle x_{n-2},x_{n-1} \rangle, \langle x_{n-1}, x_{n-1} \rangle, \langle x_{n-1},b\zeta \rangle \in R
 &\Rightarrow \left \langle \threesom{x_{n-2}}{x_{n-1}}{x_{n-1}}{(w_{n-1})}_X, b\zeta \alpha_X \right\rangle \in R.
\end{aligned}$}
\]
From $(\nabla_X+1_X){(w_j)}_X=(1_X+\nabla_X){(w_{j+1})}_X$ for $j\in\{1, \dots, n-2\}$ we conclude that
\[
\langle a\zeta \alpha_X, b\zeta\alpha_X \rangle=\langle a, b \rangle \zeta \alpha_X \in R^{n-1},
\]
so $\langle a,b \rangle \in R^{n-1}$ since $\zeta$ and $\alpha_X$ are regular epimorphisms.

(c) $\Rightarrow$ (b) By Lemma~\ref{(2n-2)-permutable}, we know that $\XX$ is $(2n-2)$-permutable. Let $R$ be a reflexive relation. Using the equivalence (a) $\Leftrightarrow$ (b) for $(2n-2)$-permutability, we have $R^{\op}\leq R^{2n-3}$. Using our assumption $R^n\leq R^{n-1}$ several (in fact, $n-2$) times we obtain
\[
R^{\op}\leq R^{2n-3}\leq R^{2n-2}\leq \dots \leq R^n\leq R^{n-1}.
\]
This finishes the proof.
\end{proof}

\begin{corollary}\label{Corollary RTS}
In an $n$-permutable category with binary coproducts, any reflexive and transitive relation is symmetric.
\end{corollary}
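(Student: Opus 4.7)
The plan is to derive this as an immediate consequence of Theorem~\ref{imaginary total thm}. Given a reflexive and transitive relation $R$ on an object $Y$ in a regular category $\XX$ with binary coproducts that is $n$-permutable, we have, by the equivalence (a) $\Leftrightarrow$ (b) of Theorem~\ref{imaginary total thm}, the inequality $R^{\op}\leq R^{n-1}$.

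Next, I would use transitivity $RR=R$ to collapse the powers of $R$: a straightforward induction on $k\geq 1$ shows that $R^{k}=R$, so in particular $R^{n-1}=R$. Combining with the previous step gives $R^{\op}\leq R$.

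Finally, I would observe that the operation $(-)^{\op}$ on relations is order-preserving and involutive: from $R^{\op}\leq R$ we obtain $R=(R^{\op})^{\op}\leq R^{\op}$. Hence $R=R^{\op}$, which is exactly symmetry. There is no genuine obstacle here; the content of the corollary is entirely absorbed into Theorem~\ref{imaginary total thm}, and the argument consists only of noting that transitivity forces the power $R^{n-1}$ appearing in condition~(b) to coincide with $R$ itself.
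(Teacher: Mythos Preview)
Your argument is correct and matches the paper's proof essentially verbatim: both combine $R^{\op}\leq R^{n-1}$ from Theorem~\ref{imaginary total thm} with transitivity to obtain $R^{\op}\leq R$. Your only addition is making explicit the (trivial) passage from $R^{\op}\leq R$ to $R^{\op}=R$, which the paper leaves to the reader.
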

\begin{proof}
It suffices to combine $R^{\op}\leq R^{n-1}$ for $R$ reflexive with $RR\leq R$ for $R$ transitive to see that $R^{\op}\leq R$.
\end{proof}

\section*{Acknowledgement}

Thanks to the referee for many detailed comments and important suggestions.


\begin{thebibliography}{10}

\bibitem{Barr}
M.~Barr, \emph{Exact categories}, Exact categories and categories of sheaves,
  Lecture Notes in Math., vol. 236, Springer, 1971, pp.~1--120.

\bibitem{AMO}
D.~Bourn and Z.~Janelidze, \emph{Approximate {M}al'tsev operations}, Theory
  Appl. Categ. \textbf{21} (2008), no.~8, 152--171.

\bibitem{DB-ZJ-2009b}
D.~Bourn and Z.~Janelidze, \emph{Pointed protomodularity via natural imaginary subtractions}, J.
  Pure Appl. Algebra \textbf{213} (2009), 1835--1851.

\bibitem{DB-ZJ-2009}
D.~Bourn and Z.~Janelidze, \emph{Subtractive categories and extended subtractions}, Appl.\
  Categ.\ Structures \textbf{17} (2009), 317--343.

\bibitem{DB-ZJ-2011}
D.~Bourn and Z.~Janelidze, \emph{Categorical (binary) difference terms and protomodularity},
  Algebra Universalis \textbf{66} (2011), 277--316.

\bibitem{Carboni-Kelly-Pedicchio}
A.~Carboni, G.~M. Kelly, and M.~C. Pedicchio, \emph{Some remarks on {M}altsev
  and {G}oursat categories}, Appl. Categ. Structures \textbf{1} (1993),
  385--421.

\bibitem{Groetzer}
G.~Gr{\"a}tzer, \emph{Two {M}al'cev type theorems in universal algebra}, J.
  Comb. Theory \textbf{8} (1970), 334--342.

\bibitem{Hagemann}
J.~Hagemann, \emph{Grundlagen der allgemeinen topologischen {A}lgebra},
  unpublished.

\bibitem{Hagemann-Mitschke}
J.~Hagemann and A.~Mitschke, \emph{On {$n$}-permutable congruences}, Algebra
  Universalis \textbf{3} (1973), 8--12.

\bibitem{ZJanCPIRVI}
Z.~Janelidze, \emph{Closedness properties of internal relations {VI}:
  {A}pproximate operations}, Cah. Topol. G\'eom. Diff\'er. Cat\'egor.
  \textbf{L} (2009), 298--319.

\bibitem{JRVdL1}
Z.~Janelidze, D.~Rodelo, and T.~Van~der Linden, \emph{Hagemann's theorem for
  regular categories}, submitted, 2013.

\bibitem{Maltsev-Sbornik}
A.~I. Mal'cev, \emph{On the general theory of algebraic systems}, Mat. Sbornik
  N. S. \textbf{35} (1954), no.~6, 3--20.

\bibitem{MFVdL2}
N.~Martins-Ferreira and T.~Van~der Linden, \emph{Categories vs. groupoids via
  generalised {M}al'tsev properties}, preprint \texttt{arXiv:1206.2745v1},
  2012.

\bibitem{Mitschke}
A.~Mitschke, \emph{Implication algebras are 3-permutable and 3-distributive},
  Algebra Universalis \textbf{1} (1971), 182--186.

\bibitem{Schmidt}
E.~T. Schmidt, \emph{On {$n$}-permutable equational classes}, Acta Sci. Math.
  (Szeged) \textbf{33} (1972), 29--39.

\bibitem{Smith}
J.~D.~H. Smith, \emph{{M}al'cev varieties}, Lecture Notes in Math., vol. 554,
  Springer, 1976.

\bibitem{Wille}
R.~Wille, \emph{Kongruenzklassengeometrien}, Lecture Notes in Math., vol. 113,
  Springer, 1970.

\end{thebibliography}

\providecommand{\noopsort}[1]{}
\providecommand{\bysame}{\leavevmode\hbox to3em{\hrulefill}\thinspace}
\providecommand{\MR}{\relax\ifhmode\unskip\space\fi MR }
\providecommand{\MRhref}[2]{%
  \href{http://www.ams.org/mathscinet-getitem?mr=#1}{#2}
}
\providecommand{\href}[2]{#2}

\end{document}